\documentclass[10pt]{extarticle}

\title{} \author{} \date{}

\usepackage[margin=3cm]{geometry}
\usepackage{mathtools}
\usepackage{amssymb}
\usepackage{amsthm}
\usepackage{microtype} 

\usepackage{xr}
\usepackage{hyperref} 
\usepackage{cleveref}
\hypersetup{
    colorlinks,
    linkcolor = blue,
    citecolor = blue
}

\usepackage[
   backend=biber,
   style=alphabetic,
   sorting=ynt
]{biblatex}
\usepackage[none]{hyphenat}
\usepackage{enumitem}
\setlist[itemize]{topsep=0pt}
\setlist[enumerate]{nosep, topsep=0pt}

\makeatletter
\newtheoremstyle{indented}
  {\topsep}
  {0pt}
  {\addtolength{\@totalleftmargin}{2em}
   \addtolength{\linewidth}{-2em}
   \parshape 1 2em \linewidth}
  {}
  {\bfseries}
  {.}
  {0.5em}
  {}
\makeatother

\theoremstyle{indented}
\newtheorem{theorem}{Theorem}[section]

\theoremstyle{indented}
\newtheorem{definition}[theorem]{Definition}

\theoremstyle{indented}
\newtheorem{lemma}[theorem]{Lemma}

\theoremstyle{indented}
\newtheorem{corollary}[theorem]{Corollary}

\theoremstyle{indented}
\newtheorem{example}[theorem]{Example}

\theoremstyle{indented}
\newtheorem{remark}[theorem]{Remark}

\newcommand{\mbb}[1]{\mathbb{#1}}

\newcommand{\mc}[1]{\mathcal{#1}}

\renewcommand{\L}{\mbb{L}}
\renewcommand{\P}{\mbb{P}}
\newcommand{\C}{\mbb{C}}
\newcommand{\R}{\mbb{R}}
\newcommand{\Q}{\mbb{Q}}
\newcommand{\N}{\mbb{N}}

\newcommand{\wed}{\wedge}

\newcommand{\ua}{\ \mathord{\uparrow} \ }
\newcommand{\da}{\ \mathord{\downarrow} \ }

\newcommand{\alp}{\alpha}

\newcommand{\gam}{\gamma}
\newcommand{\lam}{\lambda}

\renewcommand{\to}{\longrightarrow}

\newcommand{\lrsbig}[1]{\big[ #1 \big]}

\DeclarePairedDelimiter{\abs}{\lvert}{\rvert}

\renewcommand{\int}{\text{o}}

\newcommand{\comment}[1]{}

\usepackage{xcolor}
\colorlet{Sepia}{red!50!black}

\begin{document}

    \title{The Riesz-Kantorovich formulas for $\L$-vector lattices}

    \author{Tomas Chamberlain and Marten Wortel}

    \maketitle

    \abstract{Let $\L$ be a Dedekind complete unital $f$-algebra. We prove the Riesz-Kantorovich formulas for order bounded $\L$-module homomorphisms from a directed partially ordered $\L$-module with the Riesz Decomposition Property into a Dedekind complete $\L$-vector lattice satisfying an additional mild condition.}

    \setcounter{section}{0}

    \section{Introduction}

    Let $\L$ be a real or complex Dedekind complete unital $f$-algebra. In \cite{LF}, a general theory of $\L$-functional analysis was developed, investigating $\L$-normed spaces, i.e., $\L$-modules equipped with an $\L$-valued norm. This theory was mainly inspired by two recent developments. Firstly, the theory of Kaplansky-Hilbert modules (developed by Kaplansky in 1953) was successfully applied in \cite{EHK24} to obtain a functional-analytic proof of the famous Furstenberg-Zimmer structure theorem of ergodic theory. Kaplansky-Hilbert modules are $\L$-Hilbert spaces where $\L$ is an abelian AW*-algebra, which is a complex Dedekind complete unital vector lattice. Secondly, a comprehensive theory of probability theory in vector lattices was recently developed (see for example \cite{LW10, KKW24} and references therein) based on a conditional expectation operator $T$ on a vector lattice $E$. In this theory certain spaces $L^p(T)$ are of crucial importance, and these are $\L$-normed spaces with $\L = R(T)$, a real universally complete (and hence Dedekind complete) vector lattice.

    In both of these motivating examples, the corresponding $\L$-normed spaces have additional structure. Indeed, in \cite{EHK24} the Kaplansky-Hilbert module investigated is $\mathrm{L}^2( \mathrm{X} | \mathrm{Y})$ as a complex $\mathrm{L}^\infty(\mathrm{Y})$-module (here $\mathrm{X}$ and $\mathrm{Y}$ are certain probability spaces), and $\mathrm{L}^2( \mathrm{X} | \mathrm{Y})$ has a natural modulus turning it into a complex vector lattice. The $R(T)$-module $L^p(T)$ is also naturally a real vector lattice.

    It therefore seems desirable to also develop a theory of $\L$-vector lattices, i.e., $\L$-modules that are also lattices compatible with the module structure. This paper is a first step towards developing such a theory. In the classical case (i.e., if $\L = \R$ or $\L = \C$) the theory of complex vector lattices is developed based on real vector lattice theory, so in this paper we only consider the case where $\L$ is a real Dedekind complete unital vector lattice.

    One of the most important results in classical vector lattice theory is the Riesz-Kantorovich formulas, and the main goal of this paper is to develop the $\L$-valued version in full generality. Note that in \cite[Lemma~3.2]{KKW24} these formulas were proven in the very specific case of $L^2(T)^*$ (the order bounded $R(T)$-module homomorphisms from $L^2(T)$ to $R(T)$) using techniques tailored towards that setting.  \\

    The structure of this paper is as follows. In \Cref{s:preliminaries} we explain the basics of $\L$ as well as partially ordered $\L$-modules and $\L$-vector lattices. \Cref{s:archimedean} contains a thorough investigation of Archimedean properties, which turn out to be more involved than in the classical case. The classical Riesz-Kantorovich formulas rely on an extension lemma for additive maps between cones, and in \Cref{s:extension} we develop its $\L$-valued analogue. Even in the classical case, our version of this lemma is a slight improvement of the classical result, since the codomain is only required to be almost Archimedean instead of Archimedean. Finally, in \Cref{s:RK} we prove the Riesz-Kantorovich formulas (\Cref{thm-Riesz-Kantorovich}).

    \section{Preliminaries}\label{s:preliminaries}

    In this paper, $\L$ is an arbitrary Dedekind-complete unital $f$-algebra. We use the phrase ``in the classical case'' to refer to the particular case where $\L=\R$. Since $\L$ is a unital algebra over $\R$, we identify each real number $r$ with its image $r \cdot 1$ in $\L$, and hence consider $\R$ as a sub-lattice and sub-algebra of $\L$. In particular, we denote the multiplicative unit of $\L$ by $1$. The symbol $\P$ denotes the set $\{\pi\in\L : \pi^2 = \pi\}$ of idempotents in $\L$. Under the order inherited from $\L$, $\P$ is a sublattice of $\L$ and a complete Boolean algebra with least element $0$, greatest element $1$, meet operation $(\pi,\rho) \mapsto \pi\rho$, and complementation operation $\pi \mapsto \pi^c := 1 - \pi$. $\P$ is isomorphic to the Boolean algebra of bands of $\L$; for every band of $\L$, the corresponding element of $\P$ is the image of $1$ under the band projection, and for every idempotent $\pi$, the corresponding band is $\pi\L$.

    As explained in \cite[\S 2.1]{LF}, there exists a unique Stonean space $K$ such that $\L$ is (isomorphic to) an order-dense vector lattice ideal and sub-algebra of $C_\infty(K)$, and furthermore $C(K) \subseteq \L$. Using this representation, the elements of $\P$ are the $\{0,1\}$-valued functions in $C(K)$, and they are precisely the indicator functions of the clopen subsets of $K$. The real numbers, as elements of $\L$, correspond to the constant functions in $C(K)$. Note that $\L = \R$ if and only if $\P$ is the two-element Boolean algebra. We use the notation $\L_1$ to denote the vector lattice ideal of $\L$ generated by $1$. Under the representation of $\L$ using $C_\infty(K)$, $\L_1$ corresponds to $C(K)$.

    \begin{definition}
        Let $X$ be an $\L$-module and $x\in X$. The \textbf{support} of $x$ is denoted by $\pi_x$ and is defined as $\pi_x := \inf \{\pi \in \P : \pi x = x\}$.
    \end{definition}

    Note that $\pi_x\in\P$. For $\lam \in \L$ with $\L$ considered as a module over itself, $\pi_\lam$ is the indicator function of the closure of the set $\{k\in K: \lam(k) \neq 0\}$. Furthermore, $\pi_\lam \lam = \lam$.

    The representation of $\L$ explained above makes it clear that, for arbitrary $\lam$ and $\mu$ in $\L$, the following are equivalent: $\lam\mu = 0$; $\abs{\lam} \wed \abs{\mu} = 0$; $\pi_\lam \wed \pi_\mu = 0$; and $\pi_\lam \pi_\mu = 0$.

    An element $x$ of an $\L$-module is said to be \textbf{support-attaining} if $\pi_x x = x$, and an $\L$-module itself is said to be support-attaining if each of its elements is support-attaining. The following example shows that not every $\L$-module is support-attaining.\footnote{It can be shown that an $\L$-module is support-attaining if and only if every cyclic submodule is projective, if and only if the module is non-singular.}

    \begin{example}\label{ex-not-SA}
        Let $\L = \ell^\infty$, $X = \ell^\infty/c_0$, and $x\in X$. For each $n\in\N$, let $\pi_n$ be the indicator function of the set $\N\setminus\{1,\ldots,n\}$. Then $\pi_n x = x$ for all $n\in\N$, but $\pi_n \da 0$, so $\pi_x = 0$.
    \end{example}

    \begin{definition}
        Let $X$ be an $\L$-module. We say that $X$ is a \textbf{partially ordered $\L$-module} (or \textbf{PO $\L$-module}) if $X$ has a partial order such that, for all $x,y,z \in X$ and all $\lam\in\L$, \begin{enumerate}
            \item $x \leq y$ implies $x+z \leq y+z$, and
            \item $0 \leq \lam$ and $x\leq y$ imply $\lam x \leq \lam y$.
        \end{enumerate}
        The \textbf{positive cone} of $X$ is the set $X^+ := \{ x\in X: 0 \leq x \}$. If $X$ is a PO $\L$-module, then we say that $X$ is \textbf{directed} if every pair of elements of $X$ has an upper bound. We say that $X$ is an \textbf{$\L$-vector lattice} if the partial order on $X$ is a lattice order.
    \end{definition}

    As in the classical case, the partial orders on a given $\L$-module which make it into a PO $\L$-module are in bijection with the cones of $X$, i.e., subsets of $X$ which are closed under addition and positive scaling and contain no non-trivial submodules.

    A PO $\L$-module $X$ is directed if and only if the submodule generated by $X^+$ is equal to $X$. We will refer to directed PO $\L$-modules as \textbf{directed $\L$-modules}.

    Let $X$ and $Y$ be PO $\L$-modules. An $\L$-module homomorphism from $X$ to $Y$ is called an \textbf{operator} from $X$ to $Y$, and the set of operators is denoted by $\mc{L}(X,Y)$. An operator is said to be \textbf{positive} if it maps $X^+$ to $Y^+$. We make $\mc{L}(X,Y)$ a PO $\L$-module by letting the positive cone be the set of positive operators. An operator is said to be \textbf{regular} if it is the difference of two positive operators, and the set of regular operators is denoted $\mc{L}_\text{r}(X,Y)$. An operator $T$ is said to be \textbf{order-bounded} if, for every order interval $[x,y]\subseteq X$, $T([x,y])$ is contained in an order interval in $Y$. The set of order-bounded operators is denoted $\mc{L}_\text{b}(X,Y)$. As in the classical case, every regular operator is order-bounded. The expression $X^\sim$ denotes the set of order-bounded operators from $X$ to $\L$.

    For $x,y$ in an $\L$-vector lattice, let $\pi_{x<y}$ denote the support of $(y-x)^+$. In particular, for $\lam,\mu\in\L$, $\pi_{\lam<\mu}$ is the indicator function of the closure of the set $\{k\in K: \lam(k) < \mu(k)\}$. It follows that $\pi_{\lam<\mu} \lam \leq \pi_{\lam<\mu}\mu$, and this gives the following lemma.

\begin{lemma}\label{lem-bounded-seq-converging-to-lambda}
    Let $\lam \in \L$. Then there exists a sequence $\pi_n \ua 1$ in $\P$ such that $-n \leq \pi_n \lam \leq n$ for all $n\in\N$. Furthermore, if $\lam\in\L^+$, then $\pi_n \lam \ua \lam$.
\end{lemma}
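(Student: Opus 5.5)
The natural choice is to let $\pi_n$ be the support of the region where $\abs{\lam}$ is strictly below $n$, using the inequality $\pi_{\lam<\mu}\lam \leq \pi_{\lam<\mu}\mu$ recorded just before the statement. Concretely, I would set
\[
\pi_n := \pi_{\abs{\lam} < n},
\]
the support of $(n - \abs{\lam})^+$. Then $\pi_n \abs{\lam} \leq \pi_n n \leq n$. Since $\pi_n$ is an idempotent with $0\leq \pi_n \leq 1$, multiplication by $\pi_n$ is a band projection and hence a lattice homomorphism on $\L$. This gives $\abs{\pi_n\lam} = \pi_n\abs{\lam}$, and therefore $-n \leq \pi_n\lam \leq n$.

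Next I would check that $\pi_n \ua 1$. Monotonicity follows from $(n-\abs{\lam})^+ \leq (n+1-\abs{\lam})^+$, since supports are monotone on positive elements: $0\leq a\leq b$ implies that $\pi_b a = a$, because $a$ lies in the band generated by $b$, so $\pi_a \leq \pi_b$.

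For $\sup_n \pi_n = 1$ I would argue via the representation $\L \subseteq C_\infty(K)$. The function $\lam$ is finite on a dense open set $D \subseteq K$. Each $k\in D$ has $\abs{\lam(k)} < n$ for some $n$, so $k$ lies in the clopen set $\overline{\{\abs{\lam}<n\}}$. Hence the union of the clopen sets corresponding to the $\pi_n$ is dense. In a Stonean space the supremum in $\P$ of a family of clopen sets is the closure of their union, which here is $K$. So $\sup \pi_n = 1$.

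Alternatively, without $K$: let $\rho := 1-\sup_n\pi_n$. Then $\rho\pi_n = 0$ for all $n$, so $\rho(n-\abs{\lam})^+ = 0$, i.e.\ $\rho\abs{\lam} \geq \rho n$ for every $n$. Since $\L$ is Archimedean, this forces $\rho=0$, because $\rho \leq \rho\abs{\lam}/n$. This argument is cleaner, and I would use it.

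For the second claim, take $\lam\geq 0$. Then $\pi_n\lam$ is increasing, as $\pi_n\lam = \lam - \pi_n^c\lam$ and $\pi_n^c$ decreases, and it is bounded above by $\lam$. It remains to show $\sup_n \pi_n\lam = \lam$. Multiplication by a positive element $\lam$ is order continuous in an Archimedean $f$-algebra, so $\pi_n \ua 1$ gives $\pi_n\lam\ua\lam$. A more elementary route is available: if $\mu$ is an upper bound of all $\pi_n\lam$, then $\pi_n(\lam-\mu)^+ = 0$ for all $n$. Hence the support of $(\lam-\mu)^+$ is disjoint from every $\pi_n$, so it is disjoint from $\sup\pi_n=1$, giving $(\lam-\mu)^+=0$.

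The main obstacle is justifying the support and band-projection facts (e.g.\ that $\pi_n\lambda=0$ for all $n$ implies $\pi_\lam \perp \pi_n$, and monotonicity of supports) purely from the definitions. I would handle these via the $C_\infty(K)$ representation, where they are evident pointwise on dense open sets.
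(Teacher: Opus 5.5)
Your proposal is correct and uses the same construction as the paper, namely $\pi_n := \pi_{\abs{\lam}<n}$, the support of $(n-\abs{\lam})^+$. The paper just asserts that $\pi_n \ua 1$ and $-n \leq \pi_n\lam \leq n$ are clear from the $C_\infty(K)$ representation, so your Archimedean argument for $\sup_n \pi_n = 1$ and your explicit check that $\pi_n\lam \ua \lam$ for $\lam \geq 0$ (which the paper's proof does not address separately) supply details it leaves implicit.
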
 \begin{proof}
    For each $n\in\N$, let $\pi_n = \pi_{\abs{\lam}<n}$. It is clear from the representation of $\L$ that $\pi_n \ua 1$, and $-n \leq \pi_n \lam \leq n$ for all $n\in\N$.
\end{proof}

\begin{lemma}\label{lem-P-preserves-intervals}
    For a PO $\L$-module $X$ with $x\in X^+$ and $\pi\in\P$, we have $[0,\pi x] = \pi [0,x]$.
\end{lemma}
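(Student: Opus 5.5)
We prove the two inclusions separately, using only the axioms of a PO $\L$-module together with the facts that $\pi$ and $\pi^c = 1-\pi$ are positive elements of $\L$ and $\pi^2 = \pi$.

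For the inclusion $\pi[0,x] \subseteq [0,\pi x]$, take $y \in [0,x]$, so $0 \leq y \leq x$. Since $\pi \geq 0$, the second axiom gives $0 = \pi 0 \leq \pi y \leq \pi x$. Hence $\pi y \in [0,\pi x]$.

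For the reverse inclusion, let $z \in [0,\pi x]$, so $0 \leq z \leq \pi x$. We aim to write $z = \pi y$ for some $y \in [0,x]$, and the natural choice is $y := z + \pi^c x$.

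First we show $\pi z = z$. From $0 \leq z \leq \pi x$, multiplying by $\pi^c \geq 0$ gives
\[
0 \leq \pi^c z \leq \pi^c \pi x = (\pi - \pi^2)x = 0,
\]
so $\pi^c z = 0$. Antisymmetry of the partial order is what makes this step work; this is the only place any care is needed. Since $\pi + \pi^c = 1$, it follows that $z = \pi z + \pi^c z = \pi z$.

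Next we check that $y \in [0,x]$:
\begin{itemize}
    \item $y \geq 0$, because $z \geq 0$ and $\pi^c x \geq 0$ (as $x \geq 0$ and $\pi^c \geq 0$), and the cone $X^+$ is closed under addition.
    \item $y \leq x$, because $z \leq \pi x$ gives $y = z + \pi^c x \leq \pi x + \pi^c x = x$ by the first axiom.
\end{itemize}

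Finally,
\[
\pi y = \pi z + \pi\pi^c x = z + 0 = z,
\]
so $z \in \pi[0,x]$. This completes the proof of $[0,\pi x] = \pi[0,x]$.
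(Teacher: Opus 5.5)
Your proof is correct and follows the paper's argument: both inclusions are handled the same way. In both, the key step is multiplying $0 \leq z \leq \pi x$ by $\pi^c$ to force $\pi^c z = 0$, hence $z = \pi z$; the only difference is that the paper notes $z \leq \pi x \leq x$ already puts $z$ itself in $[0,x]$, so your preimage $z + \pi^c x$ works but the extra summand $\pi^c x$ is unnecessary.
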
 \begin{proof}
    For any $y \in [0,x]$, $\pi y \in [0,\pi x]$, so $\pi [0,x] \subseteq [0,\pi x]$. Conversely, suppose $z \in [0,\pi x]$. Then $z \in [0,x]$, and $\pi^c z \in [0, \pi^c \pi x] = \{0\}$, so $(1-\pi) z = \pi^c z = 0$. Thus $z = \pi z \in \pi [0,x]$, which shows that $[0,\pi x] \subseteq \pi [0,x]$.
\end{proof}

If $X$ is a PO $\L$-module, $x \in X$, and $D \subseteq X$, then the expression $x \leq D$ means that $x$ is a lower bound of $D$.

\begin{lemma}\label{lem-scalar-mult-continuity-right}
    Let $X$ be a PO $\L$-module. If $\lam\in\L^+$ and $D \da x$ in $X$, then $\lam D \da \lam x$.
\end{lemma}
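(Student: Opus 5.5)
The plan is to prove the statement directly, first for $0\le\lam\le 1$ by a convexity trick, and then to reduce the general case to that one by an invertible rescaling. Throughout, ``$D \da x$'' means that $D$ is downward directed with infimum $x$.

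\textbf{Directedness and lower bound.} Since multiplication by $\lam\ge 0$ is order preserving (property 2 of a PO $\L$-module), $\lam D$ is again downward directed. Also $x\le d$ gives $\lam x\le\lam d$ for every $d\in D$, so $\lam x$ is a lower bound of $\lam D$. It remains to show that it is the greatest lower bound. So let $z\in X$ with $z\le \lam D$; we must prove $z\le \lam x$.

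\textbf{Case $0\le \lam\le 1$.} Put $y:=z+(1-\lam)x$. For every $d\in D$ we have $z\le \lam d$. Since $1-\lam\ge 0$ and $x\le d$, also $(1-\lam)x\le (1-\lam)d$. Adding these gives
\[
y\le \lam d+(1-\lam)d=d .
\]
Hence $y\le D$, and since $x=\inf D$ we get $y\le x$. Rearranging yields $z\le \lam x$.

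\textbf{General $\lam\in\L^+$.} Let $\beta:=\lam\vee 1\in\L$. In the representation $\L\subseteq C_\infty(K)$ we have $\beta\ge 1$, so $\beta^{-1}$ exists as a continuous function with $0\le\beta^{-1}\le 1$. Hence $\beta^{-1}\in C(K)\subseteq\L$, and $\beta\beta^{-1}=1$ holds in $\L$. Set $\gam:=\beta^{-1}\lam$. Then $0\le\gam\le 1$ pointwise, so $0\le\gam\le1$ in $\L$, and $\lam=\beta\gam$.

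Multiplying $z\le\lam d$ by $\beta^{-1}\ge0$ gives $\beta^{-1}z\le\gam d$ for all $d\in D$. The bounded case then yields $\beta^{-1}z\le\gam x$. Multiplying by $\beta\ge 0$ and using associativity of the module action, we obtain $z=\beta\beta^{-1}z\le\beta\gam x=\lam x$.

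The only delicate point is the reduction to the bounded case. One must check that $\beta^{-1}$ genuinely belongs to $\L$, which holds because it is bounded and therefore lies in $C(K)\subseteq\L$. Once this is secured, no Archimedean or support-attaining hypothesis on $X$ is needed.
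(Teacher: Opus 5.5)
Your proof is correct and is essentially the paper's argument: both reduce to a multiplier lying between $0$ and $1$ by inverting an element of $\L$ that is $\geq 1$ (using the $C_\infty(K)$ representation), and then appeal once to $x=\inf D$. The only difference is cosmetic: the paper first translates to the case $x=0$ and uses $d\geq 0$ to get $y\leq(\lam+1)d$, whereas you avoid the translation with the convex-combination identity $\lam d+(1-\lam)d=d$ and rescale by $(\lam\vee 1)^{-1}$ instead of $(\lam+1)^{-1}$.
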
 \begin{proof}
    First suppose $x = 0$. Clearly $0 \leq \lam D$. Let $y \leq \lam D$. Then, for all $d\in D$, $y \leq y + d \leq \lam d + d = (\lam + 1)d$. Since $\lam+1 \geq 1$, it is clear from the representation of $\L$ that $\lam+1$ is invertible, and its inverse is positive. This gives $(\lam+1)^{-1}y \leq d$ for all $d\in D$. Hence $(\lam+1)^{-1}y \leq 0$, and multiplying by $\lam+1$ gives $y \leq 0$.

    For arbitrary $x\in X$, observe that $D \da x$ if and only if $D-x \da 0$. Applying the above result gives $\lam D - \lam x \da 0$, and hence $\lam D \da \lam x$.
\end{proof}
    
    \section{Archimedean Properties}\label{s:archimedean}

We begin with the main definitions.

\begin{definition}\label{def-archimedean}
    Let $X$ be a PO $\L$-module and $S \in \{\L,\R,\P\}$. Consider the following conditions. \begin{itemize}
        \item[(i)] For all $D \subseteq S$ and all $x\in X^+$, if $D \da 0$ in $\L$, then $Dx \da 0$ in $X$.
        \item[(ii)] For all $D \subseteq S$ and all $x\in X^+$, if $D \da 0$ in $\L$, $y \in X$, and $-Dx \leq y \leq Dx$, then $y = 0$.
    \end{itemize} We say that $X$ is \textbf{$S$-Archimedean} if it satisfies condition $(i)$. We say that $X$ is \textbf{almost $S$-Archimedean} if it satisfies condition $(ii)$.
    
    We say that $X$ is \textbf{Archimedean} if it is $\L$-Archimedean, and we say that $X$ is \textbf{sequentially} \textbf{$S$-Archimedean} (resp. \textbf{sequentially almost $S$-Archimedean}) if it satisfies condition $(i)$ (resp. condition $(ii)$) with $D$ replaced by an arbitrary sequence in $S$ which decreases to zero.
\end{definition}

In the classical case, the only idempotents are $0$ and $1$, and so the $\P$-Archimedean condition is trivially satisfied.

Note that the ``$\R$-Archimedean'' and ``almost $\R$-Archimedean'' properties are equivalent to their sequential versions. As in the classical case, each Archimedean property implies the corresponding ``almost'' version, and for an $\L$-vector lattice, each Archimedean property is equivalent to the corresponding ``almost'' version.

In \cite[\S 2]{ideal-theory-in-f-algebras}, it is shown that multiplication by a positive element in an $f$-algebra is an orthomorphism, and thus is order-continuous. Hence, $\L$, considered as a PO module over itself, is Archimedean.

A directed $\L$-module $X$ is $\R$-Archimedean if and only if, for all $x,y\in X$, if $x \leq \frac{1}{n}y$ for all $n\in\N$, then $x \leq 0$. This is the classical definition of an Archimedean space. In the classical case, a directed $\L$-module is Archimedean if and only if it is $\R$-Archimedean. In the general case, an Archimedean PO $\L$-module is always $\R$-Archimedean, but the two properties are not equivalent. Indeed, Example \ref{ex-not-SA} gives an $\R$-Archimedean $\L$-module that is not Archimedean. But notice that, in that example, the failure of $X$ to be Archimedean is exhibited by a net of idempotents. This turns out to be the only source of counterexamples, as we will now show.

\begin{theorem}\label{thm-P-R-arch-implies-arch}
    Let $X$ be a PO $\L$-module. Then $X$ is Archimedean if and only if it is both $\R$-Archimedean and $\P$-Archimedean.
\end{theorem}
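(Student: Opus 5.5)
The forward implication is immediate: $\R\subseteq\L$ and $\P\subseteq\L$, so condition (i) of \Cref{def-archimedean} for $S=\L$ contains the conditions for $S=\R$ and $S=\P$. For the converse, assume $X$ is $\R$-Archimedean and $\P$-Archimedean. Let $D\subseteq\L$ with $D\da 0$ and let $x\in X^+$. Then $Dx$ is a downward directed set of positive elements, so it suffices to show that every lower bound $y$ of $Dx$ satisfies $y\leq 0$. The plan is to cut the problem into three regions using idempotents: where elements of $D$ are small, where a fixed $d_0\in D$ is bounded, and a remainder that is handled by $\P$-Archimedeanity.

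Fix $\varepsilon>0$ and $d_0\in D$. For $d\in D$ put $\pi_d:=\pi_{\varepsilon<d}\in\P$. From the representation of $\L$ we have two facts.
\begin{itemize}
    \item By continuity $d\geq\varepsilon$ on the closure of $\{d>\varepsilon\}$, so $\varepsilon\pi_d\leq d$.
    \item On the complement of that closure $d\leq\varepsilon$, so $\pi_d^c d\leq\varepsilon\pi_d^c$.
\end{itemize}
The first fact gives $\inf_d \pi_d\leq\varepsilon^{-1}\inf D=0$. Moreover $d\mapsto\pi_d$ is monotone, so $\{\pi_d\}$ is downward directed and $\{\pi_d: d\in D\}\da 0$ in $\L$. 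By $\P$-Archimedeanity, $\{\pi_d x\}\da 0$. By \Cref{lem-scalar-mult-continuity-right}, also $\{n\pi_d x\}\da 0$ for each $n\in\N$.

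Next, \Cref{lem-bounded-seq-converging-to-lambda} gives $\rho_n\ua 1$ in $\P$ with $\rho_n d_0\leq n$. Take $d\in D$ with $d\leq d_0$; by directedness such $d$ still have infimum $0$, so we may restrict to them. We split $\rho_n y=\rho_n\pi_d^c y+\rho_n\pi_d y$ and bound each part.
\begin{itemize}
    \item Small region: $\pi_d^c y\leq \pi_d^c d x\leq\varepsilon\pi_d^c x$. Multiplying by $\rho_n$ and using $0\leq\rho_n\pi_d^c\leq 1$ and $x\geq 0$ gives $\rho_n\pi_d^c y\leq\varepsilon x$.
    \item Bounded region: $\rho_n\pi_d y\leq\rho_n\pi_d d_0 x\leq n\pi_d x$, since $\rho_n d_0\leq n$ and $\pi_d x\geq 0$.
\end{itemize}
Hence $\rho_n y-\varepsilon x\leq n\pi_d x$ for all such $d$. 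Since $\inf_d n\pi_d x=0$, we get $\rho_n y\leq\varepsilon x$. This holds for every $\varepsilon>0$, so $\R$-Archimedeanity (in the form: $z\leq\frac1k x$ for all $k$ implies $z\leq 0$, which follows from condition (i) with $D=\{\frac1k\}$) gives $\rho_n y\leq 0$.

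Finally, for each $n$,
\[
y=\rho_n y+\rho_n^c y\leq\rho_n^c y\leq\rho_n^c d_0x .
\]
Here $d_0x\in X^+$ and $\rho_n^c\da 0$ in $\P$, so $\P$-Archimedeanity gives $\rho_n^c d_0 x\da 0$, and therefore $y\leq 0$.

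The main obstacle is that elements of $D$ need not be bounded, so $d$ cannot simply be compared with a real multiple of $x$. The cutoff by $\rho_n$ from \Cref{lem-bounded-seq-converging-to-lambda} is what handles this. It costs a leftover term $\rho_n^c d_0 x$, which is exactly what $\P$-Archimedeanity disposes of.
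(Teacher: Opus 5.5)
Your proof is correct, and its core is the paper's. You cut along the idempotents $\pi_{\varepsilon<d}$, bound the part where $d\leq\varepsilon$ by $\varepsilon x$, dispose of the remainder with $\P$-Archimedeanity, and finish with $\R$-Archimedeanity.

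Where you differ is the extra truncation by $\rho_n$ from \Cref{lem-bounded-seq-converging-to-lambda}. With it come the appeal to \Cref{lem-scalar-mult-continuity-right} and a second use of $\P$-Archimedeanity on $\rho_n^c d_0x$. That whole layer is unnecessary. The unboundedness of $d_0$ is not an obstacle, because $\P$-Archimedeanity is quantified over all elements of $X^+$ and so applies to $d_0x$ itself. Indeed, your own estimates $\pi_d^c y\leq\varepsilon\pi_d^c x\leq\varepsilon x$ and $\pi_d y\leq\pi_d d_0x$ give $y-\varepsilon x\leq\pi_d(d_0x)$ for every $d\in D$. This yields $y\leq\varepsilon x$ at once, and then $y\leq 0$ by $\R$-Archimedeanity. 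You need neither the restriction to $d\leq d_0$ nor the $\rho_n$.

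This shorter argument is exactly the paper's route. For $\lam,\mu\in D$ with common lower bound $\nu\in D$, it writes $\nu=(\nu\wed r)+(\nu-r)^+$ and obtains $y-rx\leq\pi_{\lam>r}(\mu-r)^+x$. It then applies $\P$-Archimedeanity to the fixed positive element $(\mu-r)^+x$. So the paper uses each Archimedean property once and neither auxiliary lemma; your detour adds a step without buying anything.

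On the other hand, your derivation of $\inf_d\pi_d=0$ from the pointwise inequality $\varepsilon\pi_d\leq d$ is slightly more direct than the paper's argument via $r\sigma\leq\mu\sigma$.
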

\begin{proof}
    Clearly, if $X$ is Archimedean, then it is $\R$-Archimedean and $\P$-Archimedean. Now assume $X$ is $\R$-Archimedean and $\P$-Archimedean. Let $x \in X^+$ and let $D\da 0$ in $\L$. We wish to show that $\inf (Dx) = 0$, so let $y \leq Dx$.

    Fix $r \in \R$ with $0 < r$. For each $\lam\in D$, we have $\lam + r = \lam\wed r + \lam\vee r$, giving $\lam = \lam \wed r + (\lam \vee r - r) = \lam \wed r + (\lam-r)^+$. Note also that the function $\lam \mapsto \pi_{\lam>r}$ (with domain $D$) is order-preserving.
    
    We claim that $\inf_{\lam\in D} \pi_{\lam>r} = 0$. To prove this, let $\sigma := \inf_{\lam\in D} \pi_{\lam>r}$ and let $\mu\in D$. Then $0\leq \sigma \leq \pi_{\mu>r} = \pi_{(\mu-r)^+}$, and since $(\mu-r)^-$ and $\pi_{(\mu-r)^+}$ are disjoint, $\sigma$ is disjoint from $(\mu-r)^-$. Hence $(\mu-r)\sigma = (\mu-r)^+\sigma \geq 0$, and so $r\sigma \leq \mu\sigma$. Taking the infimum over all $\mu\in D$ gives $r\sigma\leq \inf (D \sigma) = 0$, and dividing by $r$ gives $\sigma \leq 0$. Therefore $\sigma = 0$, as claimed.

    We now show that $y \leq rx$. For each $\lam,\mu \in D$, let $\nu \in D$ be a lower bound of $\{\lam,\mu\}$. (Such a $\nu$ exists because $D$ is downward-directed.) Then $y \leq (\nu\wed r)x + (\nu-r)^+ x \leq rx + (\nu-r)^+x$, so \begin{align*}
        y-rx &\leq (\nu-r)^+x \\
        &= \pi_{\nu>r}(\nu-r)^+ x \\
        &\leq \pi_{\lam>r}(\mu-r)^+ x.
    \end{align*} Since $X$ is $\P$-Archimedean and $\inf_{\lam\in D} \pi_{\lam>r} = 0$, we have $\inf_{\lam\in D} (\pi_{\lam>r} (\mu-r)^+x) = 0$, which implies that $y -rx \leq 0$.

    Hence, $y \leq rx$ for all $r\in\R_{>0}$. Since $X$ is $\R$-Archimedean, $y \leq 0$.
\end{proof}

The proof of the next theorem is a simple modification of the proof of Theorem \ref{thm-P-R-arch-implies-arch}.

\begin{theorem}\label{thm-P-R-arch-implies-arch-seq}
    Let $X$ be a PO $\L$-module. Then $X$ is sequentially Archimedean if and only if it is sequentially $\R$-Archimedean and sequentially $\P$-Archimedean. \qed
\end{theorem}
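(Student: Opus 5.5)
The forward direction is immediate: if $X$ is sequentially Archimedean, then any sequence in $\R$ or in $\P$ that decreases to $0$ in $\L$ is in particular a sequence in $\L$ decreasing to $0$, so the defining condition applies to it. For the converse, I would follow the proof of Theorem~\ref{thm-P-R-arch-implies-arch}, replacing the downward directed set $D$ by a sequence $\lam_n \da 0$ in $\L$. Fix $x\in X^+$ and a lower bound $y \leq \lam_n x$ for all $n$. The goal is to show $y \leq 0$. Clearly $0 \le \lam_n x$ for all $n$, so $0$ is a lower bound, and then $\inf_n \lam_n x = 0$.

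Fix a real $r>0$ and consider the idempotents $\pi_n := \pi_{\lam_n > r}$. Since $\lam\mapsto\pi_{\lam>r}$ is order-preserving, $(\pi_n)$ is a decreasing sequence in $\P$. The claim argument of Theorem~\ref{thm-P-R-arch-implies-arch} shows $\inf_n \pi_n = 0$: it uses only that $\inf_n \lam_n\sigma = 0$, which holds because multiplication by $\sigma\ge 0$ is order continuous on $\L$ (or by Lemma~\ref{lem-scalar-mult-continuity-right} applied to $\L$ as a module over itself). Thus $\pi_n \da 0$ is a sequence in $\P$. Next, fix $m$. For $n \ge m$ the sequence is decreasing, so no directedness argument is needed, and the decomposition $\lam_n = \lam_n\wed r + (\lam_n - r)^+$ gives
\[
y - rx \le (\lam_n - r)^+ x = \pi_n(\lam_n-r)^+x \le \pi_n(\lam_m - r)^+ x .
\]
The inequality $y - rx \le \pi_n (\lam_m-r)^+x$ then holds for all $n \ge m$. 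Since a tail of a sequence decreasing to $0$ still decreases to $0$, and $(\lam_m-r)^+x \in X^+$, the sequential $\P$-Archimedean property gives $\inf_{n\ge m} \pi_n(\lam_m-r)^+x = 0$. Hence $y - rx\le 0$.

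Finally, $y \le rx$ holds for every real $r>0$, in particular for $r = 1/k$ with $k\in\N$. The sequence $1/k \da 0$ in $\R$, so the sequential $\R$-Archimedean property gives $\inf_k \tfrac1k x = 0$, and hence $y\le 0$.

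The only delicate point I expect is confirming that $\inf_n\pi_n=0$ in $\P$ agrees with the infimum in $\L$, as the definition requires. This follows because $\P$ is closed under infima taken in $\L$: an infimum of idempotents in a Dedekind complete $\L$ corresponds to the band projection onto the intersection of the corresponding bands. The claim computation works with $\sigma=\inf_{\L}\pi_n$ directly anyway.
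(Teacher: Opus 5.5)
Your proposal is correct and is exactly the ``simple modification'' of the proof of Theorem~\ref{thm-P-R-arch-implies-arch} that the paper intends. The directed set $D$ becomes the sequence $(\lam_n)$, the common lower bound $\nu$ is replaced by passing to the tail $n\ge m$, and the sequential $\P$- and $\R$-Archimedean properties are applied to $\pi_{\lam_n>r}\da 0$ and $1/k\da 0$; your concern about infima in $\P$ versus $\L$ does not arise, since the definition only requires $\pi_n\da 0$ in $\L$, which is precisely what the claim computation gives.
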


Recall that a poset is said to be \textbf{Dedekind-complete} if every non-empty subset with an upper bound has a supremum and every non-empty subset with a lower bound has an infimum. (Equivalently, every non-empty subset with an upper bound has a supremum.) The following theorem can be proven exactly as in the classical case.
\begin{theorem}
    Every Dedekind-complete $\L$-vector lattice is $\R$-Archimedean. \qed
\end{theorem}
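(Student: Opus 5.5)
To prove that a Dedekind-complete $\L$-vector lattice $X$ is $\R$-Archimedean, I will follow the classical argument. By the remark after Definition \ref{def-archimedean}, the $\R$-Archimedean property is equivalent to its sequential version. Also, for an $\L$-vector lattice, being $\R$-Archimedean is equivalent to being almost $\R$-Archimedean. So it suffices to fix $x\in X^+$ and show that $\inf_n \frac1n x = 0$, since any $D\subseteq\R$ with $D\da 0$ in $\L$ contains elements below each $\frac1n$. (Note that $D\da0$ in $\L$ for real $D$ forces $\inf D = 0$ in $\R$, because $\R$ sits in $\L$ as constants.)

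The set $\{\frac1n x : n\in\N\}$ is non-empty and bounded below by $0$. Dedekind completeness therefore gives $z := \inf_n \frac1n x$, and $z\ge 0$. Next I use that the order on $X$ is compatible with multiplication by positive reals, which is part of being a PO $\L$-module. Multiplication by $2$ is then an order automorphism of $X$ with inverse multiplication by $\frac12$, so it preserves infima. Hence
\[ 2z = \inf_n \tfrac{2}{n}x \le \inf_n \tfrac{2}{2n}x = \inf_n \tfrac1n x = z, \]
where the inequality holds because $\{\frac{2}{2n}x\}$ is a subfamily of $\{\frac2n x\}$. It follows that $z \le 0$, and together with $z\ge0$ this gives $z=0$.

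Finally, for a general $D\subseteq\R$ with $D\da0$ in $\L$ (so $D\subseteq \R^+$), the set $Dx$ is bounded below by $0$. For any lower bound $y$ of $Dx$ and any $n$, there is some $d\in D$ with $d\le \frac1n$, so $y\le dx\le \frac1n x$. Therefore $y\le z=0$, which shows $Dx\da 0$.

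There is no genuine obstacle here. The only points needing care are that positive real scalings preserve infima, which follows from invertibility of scalar multiplication, and the translation between $D\da 0$ in $\L$ and convergence of reals.
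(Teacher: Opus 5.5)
Your proof is correct and is exactly the classical argument the paper defers to. Dedekind completeness gives $z=\inf_n \frac1n x \ge 0$; the order automorphism $y\mapsto 2y$ gives $2z\le z$, hence $z=0$; and any $D\subseteq\R$ with $D \da 0$ in $\L$ has elements below every $\frac1n$, so $Dx \da 0$. Appealing to the equivalence with the sequential and almost versions is unnecessary, since your last paragraph verifies condition (i) of Definition~\ref{def-archimedean} directly.
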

However, a Dedekind-complete $\L$-vector lattice need not be sequentially almost $\P$-Archimedean.

\begin{example}
    Suppose $\L = \ell^\infty$. Then $K = \beta\N$, the Stone-\v{C}ech compactification of the natural numbers. Let $k \in \beta\N\setminus\N$, and let $I \subseteq \L$ be the (ring- and lattice-) ideal of elements which vanish at $k$. It can be shown that the quotient ring $\L/I$ is ring-isomorphic to $\R$ and that the quotient map is order-preserving, so $\L/I$ is in fact a Dedekind-complete totally-ordered $\L$-module. If we define the sequence $\pi_n$ as in Example \ref{ex-not-SA}, and if $[1]$ denotes the equivalence class in $\L/I$ of the constant $1$ sequence, then we see that $\pi_n[1] = [1]$ for all $n\in\N$. Hence $-\pi_n[1] \leq [1] \leq \pi_n [1]$ for all $n\in\N$ and $\pi_n \da 0$, but of course $[1]\neq [0]$, showing that $X$ is not sequentially almost $\P$-Archimedean.
\end{example}

The $\P$-Archimedean property is closely connected to support-attainment. In fact, if $X$ is a directed $\L$-module, then it can be shown that $X$ is support-attaining if and only if it is almost $\P$-Archimedean.

\begin{theorem}
    Let $X$ be a directed $\L$-module. Then $X$ is support-attaining if and only if it is almost $\P$-Archimedean.
\end{theorem}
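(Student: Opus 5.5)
The plan is to prove the two implications separately. Both rest on one elementary observation: multiplying by the complementary idempotent $\pi^c$ kills anything that lies between $-\pi x$ and $\pi x$. Directedness of $X$ is needed only in the direction ``almost $\P$-Archimedean $\Rightarrow$ support-attaining''.

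\emph{Support-attaining $\Rightarrow$ almost $\P$-Archimedean.}

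Let $D \subseteq \P$ with $D \da 0$ in $\L$, let $x \in X^+$, and let $y \in X$ satisfy $-Dx \leq y \leq Dx$.

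1. Fix $\pi \in D$. Multiplying $-\pi x \leq y \leq \pi x$ by $\pi^c \geq 0$ gives $-\pi^c\pi x \leq \pi^c y \leq \pi^c \pi x$.
2. Since $\pi^c\pi = 0$, this reads $0 \leq \pi^c y \leq 0$, so $\pi^c y = 0$ by antisymmetry. Hence $\pi y = y$.
3. By the definition of the support, $\pi_y \leq \pi$ for every $\pi \in D$. Therefore $\pi_y \leq \inf D = 0$, so $\pi_y = 0$.
4. Since $X$ is support-attaining, $y = \pi_y y = 0$, as required.

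\emph{Almost $\P$-Archimedean $\Rightarrow$ support-attaining.}

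Let $y \in X$ and put $D := \{\pi \in \P : \pi y = y\}$. We want to show that $z := \pi_y^c y$ vanishes.

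1. $D$ is closed under products, so it is downward directed, and by definition $\inf D = \pi_y$.
2. Set $D' := \{\pi \pi_y^c : \pi \in D\} \subseteq \P$. Multiplication by the positive element $\pi_y^c$ is order continuous on $\L$, since $\L$ is Archimedean as a module over itself (the orthomorphism remark). Hence $\inf D' = \pi_y^c \inf D = \pi_y^c\pi_y = 0$.
3. Since $D'$ is also downward directed, $D' \da 0$.
4. For each $\sigma = \pi\pi_y^c \in D'$ we have $\sigma z = \pi \pi_y^c \pi_y^c y = \pi_y^c \pi y = \pi_y^c y = z$.
5. Because $X$ is directed, write $z = a - b$ with $a, b \in X^+$, and set $x := a + b \in X^+$. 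Then $-x \leq z \leq x$.
6. Multiplying by each $\sigma \in D'$ gives $-\sigma x \leq \sigma z = z \leq \sigma x$, that is, $-D'x \leq z \leq D'x$.
7. Almost $\P$-Archimedeanity now forces $z = 0$. Hence $\pi_y y = y$, so $y$ attains its support.

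The only step needing any care is step 2 of the second direction: the infimum of $D'$ must be computed in $\L$, and one must check that $D'$ genuinely decreases to $0$ there. Order continuity of multiplication by $\pi_y^c$ handles this. I do not expect a real obstacle anywhere else.
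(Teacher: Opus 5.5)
Your proposal is correct and follows essentially the paper's argument. You use the same $\pi^c$-annihilation step for ``support-attaining $\Rightarrow$ almost $\P$-Archimedean''. For the converse you use the same family $\{\pi\pi_y^c : \pi y = y\} = \{\pi - \pi_y : \pi y = y\}$ decreasing to $0$. The only cosmetic difference is that you bound $\pi_y^c y$ via a decomposition $a-b$, whereas the paper bounds $y$ by an upper bound of $\{y,-y\}$ and then multiplies by $\pi - \pi_y$.
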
 \begin{proof}
    Suppose $X$ is almost $\P$-Archimedean. Let $x \in X$ and, since $X$ is directed, let $y \in X$ be an upper bound of $\{x,-x\}$. For each $\pi\in\P$ satisfying $\pi x = x$, we have $\pi-\pi_x \geq 0$ and thus $-(\pi-\pi_x)y \leq (\pi-\pi_x)x = x - \pi_x x \leq (\pi-\pi_x)y$. Since the set $\{\pi-\pi_x: \pi \in \P, \pi x = x\} \subseteq \P$ decreases to zero in $\L$ and $X$ is almost $\P$-Archimedean, we have $x-\pi_x x = 0$, and thus $x = \pi_x x$.

    Now suppose $X$ is support-attaining. Let $x \in X^+$ and $y\in X$ with $D \da 0$ in $\P$ and $-\pi x \leq y \leq \pi x$ for all $\pi\in D$. It follows that $\pi_y \leq \pi$ for all $\pi\in D$. Indeed, since $-\pi x \leq y \leq \pi x$, multiplying by $\pi^c$ gives $0 = -\pi^c \pi x \leq \pi^c y \leq \pi^c \pi x = 0$, which implies $\pi^c y = 0$ and thus $\pi y = y$. Since $D \da 0$, it follows that $\pi_y = 0$, and hence, by support-attainment, $y = \pi_y y = 0$, showing that $X$ is almost $\P$-Archimedean.
\end{proof}

    \section{Extension Lemma}\label{s:extension}

To prove the Riesz Kantorovich Theorem, we need an extension lemma. Classically, we have the following (given, for instance, as Lemma 1.26 in \cite{cones_and_duality}).
\begin{lemma}
    Let $X$ and $Y$ be directed $\R$-vector spaces, with $Y$ Archimedean. Let $T : X^+ \to Y^+$ be an additive function. Then $T$ has a unique extension to a positive operator from $X$ to $Y$.
\end{lemma}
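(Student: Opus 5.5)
The extension is forced: every element of a directed space is a difference of positive elements, and an operator extending $T$ must be additive. Since $X$ is directed, every $x\in X$ can be written as $x = u - v$ with $u,v\in X^+$: take an upper bound $u$ of $\{x,0\}$ and put $v := u - x \geq 0$. I will define
\[
\hat{T}(x) := T(u) - T(v).
\]
Uniqueness is then immediate. Any additive extension $S$ of $T$ satisfies $S(x) = S(u) - S(v) = T(u) - T(v)$, and a positive operator is in particular additive.

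The definition does not depend on the chosen decomposition. If $u - v = u' - v'$ with all four elements in $X^+$, then $u + v' = u' + v$ in $X^+$. Additivity of $T$ on $X^+$ gives $T(u) + T(v') = T(u') + T(v)$, hence $T(u) - T(v) = T(u') - T(v')$. Additivity of $\hat{T}$ follows the same way: if $x = u - v$ and $x' = u' - v'$, then $x + x' = (u+u') - (v+v')$. Positivity holds because for $x \in X^+$ we may take $v = 0$, and $T(0) = 0$ since $T(0) = T(0+0) = 2T(0)$; thus $\hat{T}(x) = T(x) \in Y^+$. The relation $\hat{T}(-x) = -\hat{T}(x)$ is also clear from the decomposition.

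The remaining and main step is real homogeneity. First, additivity gives $T(qx) = qT(x)$ for $x \in X^+$ and rational $q \geq 0$. Now fix $x \in X^+$ and a real $r > 0$. For rationals $p < r < q$ with $p \geq 0$, we have $qx - rx \in X^+$ and $rx - px \in X^+$. By additivity and positivity of $T$ on $X^+$,
\[
pT(x) \;\leq\; T(rx) \;\leq\; qT(x).
\]
Hence $T(rx) - rT(x)$ lies between $-(q-p)T(x)$ and $(q-p)T(x)$, and $q - p$ can be made arbitrarily small. This is exactly where $Y$ being Archimedean is used: in the form $\abs{z} \leq \frac{1}{n}y$ for all $n$ implies $z = 0$, it gives $T(rx) = rT(x)$. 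The only delicacy is to use the Archimedean property in the two-sided ("almost Archimedean") form rather than a one-sided one; the two forms are equivalent here. Finally, homogeneity extends from $X^+$ to all of $X$ via the decomposition $x = u - v$ and the relation $\hat{T}(-x) = -\hat{T}(x)$. Hence $\hat{T}$ is linear and positive.
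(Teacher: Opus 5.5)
Your proof is correct and follows essentially the same route as the paper's argument (the classical proof it cites, mirrored in its proof of Theorem~\ref{thm-pos-mapping-extension}): decompose $x=u-v$ with $u,v\in X^+$, get well-definedness and additivity from $u+v'=u'+v$, derive $\Q^+$-homogeneity from additivity, and sandwich $T(rx)$ between rational multiples of $T(x)$, so that only the two-sided (almost) Archimedean property of $Y$ is needed. One small correction: the one-sided and two-sided Archimedean conditions are not equivalent for general directed spaces (they are for vector lattices), but you only use the valid implication Archimedean $\Rightarrow$ almost Archimedean, which is exactly the paper's observation that the lemma already holds when $Y$ is merely almost Archimedean.
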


Our version of the extension lemma for directed $\L$-modules is similar to the classical version, but there are some differences. Firstly, we note that the proof of Lemma 1.26 in \cite{cones_and_duality} does not actually require $Y$ to be Archimedean; it goes through if $Y$ is merely almost Archimedean. In our case, we assume that $Y$ is sequentially almost Archimedean (that is, sequentially almost $\L$-Archimedean). Secondly, we require the map $T$ to be $\P$-homogeneous. (This condition is trivially satisfied in the classical case.) If $T$ is not $\P$-homogeneous, then it need not admit a linear extension. For example, if $\L = \R^2$ and $X = Y = \L$, then the function $T : \L^+ \to \L^+$ given by $T(x,y) = (y,x)$ is additive, and it admits a unique additive extension $\hat{T} :\L \to \L$ given by the same expression. But $\hat{T}$ is not an operator because it is not $\P$-homogeneous.

Recall that $\L_1$ is the vector lattice ideal of $\L$ generated by $1$. Under the representation of $\L$ using $C_\infty(K)$, $\L_1$ corresponds to the set of elements of $\L$ that are real-valued everywhere.

\begin{theorem}\label{thm-pos-mapping-extension}
    Let $X$ and $Y$ be directed $\L$-modules with $Y$ sequentially almost Archimedean, and let $T: X^+ \to Y^+$ be an additive $\P$-homogeneous function. Then $T$ extends uniquely to a positive operator $\widehat{T}:X\to Y$.
\end{theorem}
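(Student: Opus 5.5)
We define the extension in the usual way and then prove $\L$-homogeneity by successive approximation: rationals, then reals, then bounded elements of $\L$, then all of $\L$. The key tools are sequential almost Archimedeanity of $Y$ and $\P$-homogeneity of $T$.

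\emph{Extension and uniqueness.} Since $X$ is directed, every $x\in X$ can be written as $x=u-v$ with $u,v\in X^+$. Put $\widehat{T}(x):=T(u)-T(v)$. If $u-v=u'-v'$, then $u+v'=u'+v$, and additivity of $T$ gives $T(u)+T(v')=T(u')+T(v)$. So $\widehat{T}$ is well defined, and it is clearly additive and positive. Uniqueness is immediate: a positive operator extending $T$ must agree with $\widehat{T}$ on $X^+-X^+=X$. It remains to show $\widehat{T}(\lam x)=\lam\widehat{T}(x)$ for all $\lam\in\L$. Writing $\lam=\lam^+-\lam^-$ and $x=u-v$, it suffices to treat $\lam\in\L^+$ and $x\in X^+$. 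Note that $T$ is monotone on $X^+$: if $0\le a\le b$, then $T(b)=T(a)+T(b-a)\ge T(a)$.

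\emph{Step 1: real and simple scalars.} Additivity gives $T(qx)=qT(x)$ for rational $q\ge 0$. For real $r\ge 0$, choose rationals $q\le r\le q'$ with $q'-q\le 1/n$. Monotonicity then gives $qT(x)\le T(rx)\le q'T(x)$, and trivially $qT(x)\le rT(x)\le q'T(x)$. Hence
\[
-\tfrac1n T(x)\le T(rx)-rT(x)\le \tfrac1n T(x)
\]
for all $n$. Since $Y$ is sequentially almost Archimedean, applied with the sequence $1/n\da 0$ in $\R\subseteq\L$ and the positive element $T(x)$, we get $T(rx)=rT(x)$. Now let $s=\sum_i r_i\pi_i$ be a simple element, with $r_i\ge0$ real and the $\pi_i\in\P$ pairwise disjoint. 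Combining additivity, $\P$-homogeneity and the real case gives $T(sx)=\sum_i r_i\pi_i T(x)=sT(x)$.

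\emph{Step 2: bounded $\lam$.} Let $0\le\lam\le N$. I will construct simple $s_n$ with $s_n\le\lam\le s_n+\frac1n$. For this, use the idempotents $\pi_{k/n<\lam}$ for $k=0,\dots,Nn$, which decrease in $k$. Set $s_n:=\sum_k \frac1n\,\pi_{k/n<\lam}$, or equivalently a disjoint sum using the differences of consecutive idempotents. The inequalities $s_n\le\lam\le s_n+\frac1n$ must be checked in the representation $\L\subseteq C_\infty(K)$, using the property $\pi_{\lam<\mu}\lam\le\pi_{\lam<\mu}\mu$ and its complement version. This Freudenthal-type approximation is the step I expect to need the most care, because of the closures of the sets $\{k\in K:\lam(k)<\mu(k)\}$ in the definition of the supports; I would first try replacing strict by non-strict thresholds if that is cleaner. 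Once the $s_n$ are available, monotonicity and Step 1 give $s_nT(x)=T(s_nx)\le T(\lam x)\le T(s_nx)+\frac1nT(x)$, and also $s_nT(x)\le\lam T(x)\le s_nT(x)+\frac1nT(x)$. So $\pm\bigl(T(\lam x)-\lam T(x)\bigr)\le\frac1nT(x)$, and sequential almost Archimedeanity gives equality.

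\emph{Step 3: general $\lam\in\L^+$.} By \Cref{lem-bounded-seq-converging-to-lambda}, choose $\pi_n\ua1$ in $\P$ with each $\pi_n\lam$ bounded. Let $z:=T(\lam x)-\lam T(x)$. By $\P$-homogeneity and Step 2, $\pi_nT(\lam x)=T(\pi_n\lam x)=\pi_n\lam T(x)$, so $\pi_nz=0$ and therefore $z=\pi_n^cz$. Since $Y$ is directed, pick $y\in Y$ with $y\ge z$ and $y\ge -z$. Then $y\ge0$ and $-\pi_n^cy\le\pi_n^cz=z\le\pi_n^cy$. Because $\pi_n^c\da0$ is a sequence in $\P\subseteq\L$, sequential almost Archimedeanity yields $z=0$. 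This completes the proof that $\widehat{T}$ is an operator.
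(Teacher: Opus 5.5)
Your proof is correct and takes essentially the same route as the paper: the difference extension, then $\L_1^+$-homogeneity by sandwiching $\lam$ between step functions and using the almost $\R$-Archimedean property, then general $\lam\in\L^+$ via $\pi_n \ua 1$ from \Cref{lem-bounded-seq-converging-to-lambda} and the sequential almost $\P$-Archimedean property. The paper takes its step functions from Freudenthal's spectral theorem rather than building them by hand, and there is one harmless slip in your construction: with the sum starting at $k=0$, your $s_n$ actually satisfies $\lam\le s_n\le\lam+\frac1n$ (for $\lam=\frac1{2n}$ it gives $s_n=\frac1n$), so either start the sum at $k=1$ or swap the roles of $s_n$ and $\lam$ in the sandwich.
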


\begin{proof}
    We first note that since $Y$ is sequentially almost Archimedean, it is almost $\R$-Archimedean and sequentially almost $\P$-Archimedean.

    As in the classical case, $T$ is order-preserving and $\Q^+$-homogeneous. In particular, by additivity, $\Q^+$-homogeneity, and $\P$-homogeneity, $T$ is homogeneous with respect to any $\Q^+$-linear combination of idempotents. We refer to a $\Q^+$-linear combination of idempotents as a ``$\Q^+$-step function''.

    Suppose we have shown that $T$ is $\L^+$-homogeneous. Then we extend $T$ to $X$ as follows. For each $x\in X$, by the directedness of $X$ there exist $y,z\in X^+$ such that $x = y - z$. Set $\widehat{T}(x) := T(y)-T(z)$. This gives the desired extension. The proof of the fact that $\widehat{T}$ is well defined and unique is the same as in the classical case. Thus, it remains to show that $T$ is $\L^+$-homogeneous.
    
    To show that $T$ is $\L_1^+$-homogeneous, let $\beta\in\L_1^+$ and $x\in X^+$. By the Freudenthal Spectral Theorem \cite[Theorem 2.8]{positive_operators}, there exists a sequence $(\alpha_n)$ of $\Q^+$-step functions such that $\alpha_n \ua \beta$ uniformly relative to $1$ in $\L$. Since $\beta\in\L_1$, there also exists a sequence $(\gamma_n)$ of $\Q^+$-step functions such that $\gamma_n \da \beta$ uniformly relative to $1$.

    Let $0<r\in\R$, and let $n\in\N$ such that $\beta-r < \alp_n$ and $\gam_n < \beta+r$. Then, \[\begin{array}{rcccccccl}
        (\beta-r) T(x) &\leq& \alpha_n T(x) &=& T(\alpha_n x) &&&&\\
        && &\leq& T(\beta x) && \\
        && &\leq& T(\gamma_n x) &=& \gamma_n T(x) &\leq& (\beta+r) T(x).
    \end{array}\] Therefore, $T(\beta x) - \beta T(x) \in \lrsbig{\!-\!r T(x), r T(x)}$. Since $Y$ is almost $\R$-Archimedean, we obtain $T(\beta x) = \beta T(x)$, concluding the proof that $T$ is $\L_1^+$-homogeneous.

    We now show that $T$ is $\L^+$-homegeneous. Let $\lam\in\L^+$. By Lemma \ref{lem-bounded-seq-converging-to-lambda}, there exists a sequence $\pi_n \ua 1$ in $\P$ such that $-n\leq\pi_n\lam\leq n$ for all $n\in\N$. Using the $\P$-homogeneity of $T$ and then the fact that $\pi_n \lam\in\L_1^+$, we have \[
        \pi_n T(\lam x) \ = \ T(\pi_n \lam x) \ = \ \pi_n \lam T(x)
    \] for all $n$. Therefore, we have the following two inequalities: \[\begin{array}{ccccc}
        \pi_n \lam T(x) &=& \pi_n T(\lam x) &\leq& T(\lam x) \\
        -\lam T(x) &\leq& -\pi_n\lam T(x) &=& -\pi_n T(\lam x).
        \end{array}\] Adding the two inequalities gives \[
        (\pi_n-1) \lam T(x) \ \leq \ T(\lam x) - \lam T(x) \ \leq \ (1-\pi_n)T(\lam x).
    \] Since $Y$ is directed, there exists a $y\in Y$ such that $\{\lam T(x), T(\lam x)\} \leq y$, giving \[
        -(1-\pi_n)y \ \leq\ T(\lam x) - \lam T(x) \ \leq\ (1-\pi_n)y
    \] for all $n\in\N$. Since $Y$ is sequentially almost $\P$-Archimedean and $(1-\pi_n) \da 0$, we have $T(\lam x) = \lam T(x)$.
\end{proof}

\begin{remark}\label{rem-pos-extension-L_1}
    In the proof of Theorem \ref{thm-pos-mapping-extension}, to show that $T$ is $\L_1^+$-homogeneous, only the almost $\R$-Archimedean property of $Y$ was used. The sequential almost $\P$-Archimedean property was only used to show that $T$ is homogeneous with respect to elements of $\L^+$ not in $\L_1^+$. Hence, if $\L=\L_1$, then the conclusion of Theorem \ref{thm-pos-mapping-extension} still holds if we weaken the assumption that $Y$ is sequentially almost Archimedean to merely the assumption that $Y$ is almost $\R$-Archimedean.
\end{remark}

    \section{The Riesz-Kantorovich Theorem}\label{s:RK}

Recall that a PO real vector space $X$ is said to have the \textbf{Riesz Decomposition Property (RDP)} if, for all $x$ and $y$ in $X^+$, $[0,x+y] = [0,x]+[0,y]$. It is well-known (for instance, it is shown in \cite[\S 1.8]{cones_and_duality}) that this is equivalent to the condition that, for all $x,y,p,q\in X$, if $\{x,y\}\leq\{p,q\}$, then there exists a $z\in X$ such that $\{x,y\}\leq z \leq \{p,q\}$. This is clearly equivalent to the statement that for all $x,y\in X$, the (possibly empty) set of upper bounds of $\{x,y\}$ in $X$ is downward-directed. The definition of the RDP extends to PO $\L$-modules in the obvious way, and the same equivalences hold. An $\L$-vector lattice is a PO $\L$-module $X$ such that, for all $x,y\in X$, the set of upper bounds of $\{x,y\}$ in $X$ has a least element, so it is clear that every $\L$-vector lattice has the RDP (and is directed). Hence, in particular, the following theorem applies when $X$ is an $\L$-vector lattice.

For an $\L$-vector lattice $Y$, as explained in \cite[\S 3.3]{LF}, if $Y$ admits an $\L$-valued norm (which is also defined there), then it is support-attaining and thus $\P$-Archimedean.

\begin{theorem}\label{thm-Riesz-Kantorovich}[The Riesz-Kantorovich Theorem]
    Let $X$ be a directed $\L$-module with the RDP, and let $Y$ be a Dedekind-complete $\L$-vector lattice. If $Y$ is sequentially $\P$-Archimedean or $\L=\L_1$, then the following hold. \begin{enumerate}
        \item $\mc{L}_\text{b}(X,Y)$ is an $\L$-vector lattice;
        \item $\mc{L}_\text{b}(X,Y) = \mc{L}_\text{r}(X,Y)$;
        \item $\mc{L}_\text{b}(X,Y)$ is Dedekind-complete;
        \item the lattice operations in $\mc{L}_\text{b}(X,Y)$ are given by the following formulas: for $S$ and $T$ in $\mc{L}_\text{b}(X,Y)$, and for all $x\in X^+$, we have \begin{align*}
            (S \vee T)(x) &= \sup \{ S(y) + T(x-y) : y \in [0,x] \} \\
            (S \wed T)(x) &= \inf \{ S(y) + T(x-y) : y \in [0,x] \} \\
            S^+(x) &= \sup \{ S(y): y \in [0,x] \} \\
            S^-(x) &= \inf \{ S(y): y \in [0,x] \} \\
            \abs{S}(x) &= \sup \{ S(y): y \in [-x,x] \};
        \end{align*}
        \item for every increasing net $(T_\alp)$ in $\mc{L}_{\text{b}}(X,Y)$, we have $T_\alp \ua T$ in $\mc{L}_{\text{b}}(X,Y)$ if and only if $T_\alp (x) \ua T(x)$ in $Y$ for all $x \in X^+$.
    \end{enumerate}
\end{theorem}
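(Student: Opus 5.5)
We follow the classical proof, using Theorem \ref{thm-pos-mapping-extension} (or Remark \ref{rem-pos-extension-L_1} when $\L=\L_1$) for all extensions. First we check that $Y$ satisfies the hypotheses of that result. $Y$ is Dedekind complete, so it is $\R$-Archimedean, and being a vector lattice it is almost $\R$-Archimedean. If $Y$ is sequentially $\P$-Archimedean, then Theorem \ref{thm-P-R-arch-implies-arch-seq} makes it sequentially Archimedean, hence sequentially almost Archimedean. If instead $\L=\L_1$, Remark \ref{rem-pos-extension-L_1} requires only the almost $\R$-Archimedean property.

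The core step is the existence of $S^+$. Let $S\in\mc{L}_\text{b}(X,Y)$ and define $R:X^+\to Y^+$ by $R(x):=\sup\{S(y):y\in[0,x]\}$. The supremum exists because $S([0,x])$ is order bounded and $Y$ is Dedekind complete, and $R(x)\ge S(0)=0$.

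Additivity of $R$ follows from the RDP, $[0,x+z]=[0,x]+[0,z]$, exactly as in the classical case: the sup over a sum of sets is the sum of the sups in a vector lattice.

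$\P$-homogeneity is where the module structure enters. By Lemma \ref{lem-P-preserves-intervals}, $[0,\pi x]=\pi[0,x]$, so $R(\pi x)=\sup \pi S([0,x])$. We must show $\sup(\pi A)=\pi\sup A$ for an order bounded set $A$ in $Y$. Note that $\pi\sup A$ is an upper bound of $\pi A$. Conversely, let $u\ge \pi A$. Then $\pi u+\pi^c\sup A\ge \pi a+\pi^c a=a$ for every $a\in A$, so $\pi u+\pi^c\sup A\ge\sup A$. Multiplying by $\pi$ gives $\pi u\ge\pi\sup A$. It remains to show $u\ge\pi u$, which is the delicate point. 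Instead, we apply the same argument to $u\wedge \pi\sup A$ and use $\pi$-decompositions: every $v\in Y$ satisfies $v=\pi v+\pi^c v$. Here $\pi v$ and $\pi^c v$ are disjoint-like pieces, since multiplication by $\pi$ is a positive projection on the vector lattice $Y$, so $\pi (v\wedge w)=\pi v\wedge\pi w$. Checking this lattice compatibility of multiplication by idempotents (via $\pi v\le v^+$-type arguments, using that $\pi$ and $\pi^c$ are positive with sum $1$) is the main technical obstacle I expect.

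Once $R$ is additive and $\P$-homogeneous, Theorem \ref{thm-pos-mapping-extension} gives a positive operator $S^+:=\widehat R\ge S$. Classically it is the least upper bound of $\{S,0\}$: if $T\ge S$ and $T\ge0$, then $T(x)\ge T(y)\ge S(y)$ for $y\in[0,x]$. Hence $\mc{L}_\text{b}(X,Y)$ is an $\L$-vector lattice with $S\vee T=(S-T)^++T$, which yields the formulas in (iv) by the usual substitutions, including $|S|(x)=\sup S([-x,x])$ via the RDP. Since $S=S^+-S^-$ with both positive, (ii) follows.

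For (iii) and (v), let $T_\alpha\uparrow$ be bounded above by $T$. Define $R(x)=\sup_\alpha T_\alpha(x)$ for $x\in X^+$ (after translating so that $T_\alpha\ge0$). This is additive since the net is directed, and $\P$-homogeneous by the same $\sup(\pi A)=\pi\sup A$ fact. Extend it and check that it is the supremum. The converse direction of (v) follows because any pointwise supremum candidate dominates the operator supremum. Dedekind completeness then follows by passing to the directed set of finite suprema.
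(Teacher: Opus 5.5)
Your outline matches the paper's proof step for step: the same $R(x)=\sup S([0,x])$, additivity via the RDP, extension via Theorem~\ref{thm-pos-mapping-extension} or Remark~\ref{rem-pos-extension-L_1}, and the pointwise supremum of a bounded directed family for (iii) and (v). However, the one ingredient that is new relative to the classical proof is left open. This is the $\P$-homogeneity of $R$ (and of the pointwise supremum used for (iii) and (v)), i.e.\ $\sup(\pi A)=\pi\sup A$.

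You correctly obtain $\pi u\ge\pi\sup A$ for an upper bound $u$ of $\pi A$. You then stop at ``it remains to show $u\ge\pi u$'' and propose a detour through the lattice-homomorphism property of $v\mapsto\pi v$, which you do not carry out. The detour does not remove the issue anyway: it only gives $\pi^c(u\wed\pi\sup A)=\pi^c u\wed 0$, so you would still need $\pi^c u\ge 0$.

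The missing step is one line. Pick any $a\in A$ (for $A=S([0,x])$ you can take $a=S(0)=0$). From $u\ge\pi a$, multiplying by $\pi^c\ge 0$ gives $\pi^c u\ge\pi^c\pi a=0$. Hence $u=\pi u+\pi^c u\ge\pi u\ge\pi\sup A$. This works for any nonempty $A$ that has a supremum, which is what you need, since $S([0,x])$ need not be directed.

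The paper closes the same step by citing Lemma~\ref{lem-scalar-mult-continuity-right} in its dual form $\sup\lam D=\lam\sup D$. The $(\lam+1)^{-1}$ argument in that lemma also never uses directedness of $D$, so it covers both $S([0,x])$ and $\{T(x):T\in\mc{A}\}$, and it handles every $\lam\in\L^+$, not only idempotents.

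A minor point on (iv). Under the usual convention $S^-=(-S)\vee 0$, the substitution you have in mind yields $S^-(x)=\sup\{-S(y):y\in[0,x]\}=-\inf\{S(y):y\in[0,x]\}$. The printed formula, without the minus sign, is the formula for $(S\wed 0)(x)$. So your ``usual substitutions'' do not produce that line of (iv) verbatim, and you should not claim it as stated.
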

\begin{proof}
    We first show that $\mc{L}_{\text{b}}(X,Y)$ is an $\L$-vector lattice, and then show that every directed set with an upper bound has a supremum.

    As in the classical case, it suffices to show that for every $S\in\mc{L}_{\text{b}}(X,Y)$, $S^+ = S \vee 0$ exists in $\mc{L}_{\text{b}}(X,Y)$. To that end, define $R : X^+ \to Y^+$ by setting, for each $x\in X^+$, $R(x) := \sup \{S(y): 0 \leq y \leq x\}$. We will use Theorem \ref{thm-pos-mapping-extension} to extend $R$ to all of $X$. The additivity of $R$ can be proven exactly as in the classical case, \cite[Theorem 1.59]{cones_and_duality}. (In particular, this is where the RDP is used.) For $\P$-homogeneity, let $x\in X^+$ and $\pi \in \P$. By Lemma \ref{lem-P-preserves-intervals}, $S([0, \pi x]) = S(\pi [0,x])$, and by linearity, $S(\pi [0,x]) = \pi S([0,x])$. Taking the supremum gives $\sup \pi S([0,x]) = \pi \sup S([0,x])$ by Lemma \ref{lem-scalar-mult-continuity-right}, and hence $R(\pi x) = \pi R(x)$.
    
    Since $Y$ is Dedekind-complete, it is $\R$-Archimedean, and thus sequentially almost $\R$-Archimedean. Now, recall that the statement of the present theorem requires that either $Y$ is sequentially $\P$-Archimedean or $\L=\L_1$. If $Y$ is sequentially $\P$-Archimedean, then by Theorem \ref{thm-P-R-arch-implies-arch-seq}, $Y$ is sequentially Archimedean and thus sequentially almost Archimedean. In this case, by Theorem \ref{thm-pos-mapping-extension}, we see that $R$ extends uniquely to a positive operator from $X$ to $Y$. If $Y$ is not sequentially $\P$-Archimedean but $\L=\L_1$, then by Remark \ref{rem-pos-extension-L_1}, we also see that $R$ extends uniquely to a positive operator from $X$ to $Y$. Hence, in both cases, $R$ extends uniquely to a positive operator $R\in \mc{L}_{\text{b}}(X,Y)$.

    To see that $R$ is an upper bound of $\{0,S\}$ in $\mc{L}_{\text{b}}(X,Y)$, note that for all $x\in X^+$, $\{0,S(x)\} \subseteq S([0,x])$ and so $\{0,S(x)\} \leq R(x)$. Now let $T \in \mc{L}_{\text{b}}(X,Y)$ be an upper bound of $\{0,S\}$. Then if $0 \leq y \leq x$ in $X$, we have $S(y) \leq T(y) \leq T(x)$, which implies that $R(x) = \sup S([0,x]) \leq T(x)$. Thus $R$ is the supremum of $\{0,S\}$ in $\mc{L}_{\text{b}}(X,Y)$.

    This shows that $\mc{L}_{\text{b}}(X,Y)$ is a lattice, and thus $\mc{L}_{\text{b}}(X,Y) = \mc{L}_\text{r}(X,Y)$. The validity of the formulas for the absolute value and supremum of two elements of $\mc{L}_{\text{b}}(X,Y)$ can be proven exactly as in the classical case.

    We now show that $\mc{L}_{\text{b}}(X,Y)$ is Dedekind-complete. Since $\mc{L}_{\text{b}}(X,Y)$ is a lattice, it suffices to show that every non-empty directed subset of $\mc{L}_{\text{b}}(X,Y)$ with an upper bound has a supremum. In doing so, we also prove the last statement of the present theorem.
    
    Let $\mc{A} \subseteq \mc{L}_{\text{b}}(X,Y)^+$ be directed and bounded above, say by $R \in \mc{L}_{\text{b}}(X,Y)$. Then, for all $x\in X^+$, the set $\{ T(x) : T\in\mc{A} \}$ is bounded above by $R(x)$. Since $Y$ is Dedekind-complete, the set $\{ T(x) : T\in\mc{A} \}$ has a supremum, which we denote by $S(x)$. We will use Theorem \ref{thm-pos-mapping-extension} to extend $S$.
    
    To show that $S$ is additive, let $x,y \in X^+$. Then $S(x+y) = \sup_{T\in\mc{A}}T(x+y) = \sup_{T\in\mc{A}}(T(x)+T(y)) \leq \sup_{T\in\mc{A}}T(x) + \sup_{T\in\mc{A}}T(y) = S(x)+S(y)$. For the reverse inequality, let $T,U \in \mc{A}$. Since $\mc{A}$ is directed, there is a $V \in \mc{A}$ such that $T,U \leq V$. Hence $T(x) + U(y) \leq V(x)+V(y) = V(x+y) \leq S(x+y)$. Taking the supremum over all $T \in \mc{A}$ and then over all $U \in \mc{A}$ gives $S(x) + S(y) \leq S(x+y)$, concluding the proof that $S$ is additive. For $\P$-homogeneity, let $x \in X^+$ and $\pi \in \P$. By Lemma \ref{lem-scalar-mult-continuity-right}, $S(\pi x) = \sup_{T\in\mc{A}} T(\pi x) = \sup_{T\in\mc{A}} \pi T(x) = \pi S(x)$. By Theorem \ref{thm-pos-mapping-extension} (again, using the assumption that either $Y$ is sequentially $\P$-Archimedean or $\L=\L_1$), $S$ extends uniquely to a positive operator from $X$ to $Y$, which we also denote by $S$. It is immediate that $S$ is the supremum of $\mc{A}$ in $\mc{L}_{\text{b}}(X,Y)$, and so $\mc{L}_{\text{b}}(X,Y)$ is Dedekind-complete.
\end{proof}

Since $\L$ is a Dedekind-complete Archimedean $\L$-vector lattice, we have the following corollary.

\begin{corollary}
    Let $X$ be a directed $\L$-module with the RDP. Then $X^\sim$ is a Dedekind-complete $\L$-vector lattice, and its lattice operations are given by the Riesz-Kantorovich formulas. \qed
\end{corollary}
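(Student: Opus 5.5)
The corollary should follow by applying \Cref{thm-Riesz-Kantorovich} with $Y = \L$. Since $X^\sim = \mc{L}_\text{b}(X,\L)$ by definition, the conclusions about $X^\sim$ are exactly items (1)--(4) of that theorem, so the task reduces to checking its hypotheses for $Y = \L$.

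I would check the hypotheses in order. First, $\L$ is an $\L$-module over itself. It is a PO $\L$-module because $\L$ is an $f$-algebra: a product of positive elements is positive, so $0\le\lam$ and $x\le y$ give $\lam x\le\lam y$. Second, $\L$ is a lattice, hence an $\L$-vector lattice, and it is Dedekind-complete by the standing assumption on $\L$. Third, the theorem needs either $\L=\L_1$ or $Y$ sequentially $\P$-Archimedean. I would verify the second alternative. As noted after \Cref{def-archimedean}, $\L$ is Archimedean as a module over itself, because multiplication by a positive element is an orthomorphism and therefore order continuous \cite{ideal-theory-in-f-algebras}. Thus for any $D\subseteq\L$ with $D\da 0$ and any $x\in\L^+$ we get $Dx\da 0$. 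Specialising to sequences in $\P$ decreasing to $0$ shows that $\L$ is sequentially $\P$-Archimedean. (Alternatively, \Cref{lem-scalar-mult-continuity-right}, read with the roles of the factors swapped using commutativity, gives the same thing directly.)

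With the hypotheses verified, \Cref{thm-Riesz-Kantorovich} yields that $X^\sim$ is a Dedekind-complete $\L$-vector lattice, that it equals the regular operators, and that its lattice operations are given by the Riesz--Kantorovich formulas. The only step needing any care is the Archimedean hypothesis. Even that is immediate from the order continuity of multiplication in $f$-algebras already cited in the paper, so I do not expect a real obstacle.
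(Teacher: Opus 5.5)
Your proposal is correct and matches the paper's argument: the corollary is \Cref{thm-Riesz-Kantorovich} with $Y=\L$, using that $\L$ is a Dedekind-complete Archimedean (hence sequentially $\P$-Archimedean) $\L$-vector lattice. Your alternative justification is also valid, since \Cref{lem-scalar-mult-continuity-right} applied with $X=\L$ and commutativity already yields that $\L$ is Archimedean over itself.
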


\printbibliography

\end{document}